\title[Non-existence of acip for exponential maps]{Non-existence of
  absolutely continuous invariant probabilities for exponential maps }
\author{Neil Dobbs}\address{Institute of Mathematics of the Polish
  Academy of Sciences, ul. {\'S}niadeckich 8 P.O. Box 21, 00-956
  Warszawa, Poland} 
\email{neil.dobbs@gmail.com}
\author{Bart{\l}omiej Skorulski} \address{
  Departamento de Matem{\'a}ticas, Universidad Cat{\'o}lica del Norte,
  Avenida Angamos 0610, Casilla 1280, Antofagasta, Chile}
\email{bskorulski@ucn.cl}
\date{January 30, 2008}
\thanks{The authors were partially supported by Research Network on Low
  Dimensional Dynamics, PBCT ACT 17, CONICYT, Chile and by the EU Research Training Network ``Conformal Structures and Dynamics''. The second author
  was also supported by Chilean FONDECYT Grant No. 11060538.}
\newcommand\eprf{\hfill$\Box$}
\newcommand\Swan{{\'S}wi{\c{a}}tek }
\newcommand\arr{\mathbb{R}}
\newcommand\ccc{\mathbb{C}}
\newcommand\re{\mathrm{Re}}
\newcommand\scrP{\mathcal{P}}
\newcommand\cbar{{\overline{\mathbb{C}}}}
\newtheorem{thm}{Theorem}
\newtheorem{lem}[thm]{Lemma}
\begin{document}

\maketitle
\begin{abstract}
  We show that for entire maps of the form $z \mapsto \lambda \exp(z)$
  such that the orbit of zero is bounded and such that Lebesgue almost
  every point is transitive, no absolutely continuous invariant
  probability measure can exist. This answers a long-standing open
  problem. 
\end{abstract}

In this paper we introduce a new method to deal with the problem of
existence of invariant measures for entire maps. To illustrate this
method, avoiding uninteresting technical difficulties, we show the
following theorem.
\begin{thm} \label{thm:easy}Let $\lambda\in\ccc\setminus\{0\}$ be such
  that the Julia set of $f:z\mapsto \lambda \exp(z)$ is equal to $\ccc$,
  the forward orbit of $0$ is bounded and such that there is a set of
  positive Lebesgue measure of points $z\in \ccc$ such that
  $\omega(z)\not\subset \scrP(f)$. Then $f$ has a $\sigma$-finite
  absolutely continuous invariant measure, but it does not have an
  absolutely continuous invariant probability measure.
\end{thm}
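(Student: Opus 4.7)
My plan is to split the statement into (i) existence of a $\sigma$-finite acim and (ii) non-existence of an acip, treating them with different techniques. For (i), since the orbit of $0$ is bounded, the postsingular set $\scrP(f)$ is compact, so $\ccc\setminus\scrP(f)$ contains a round disk $B$ whose double $2B$ is still disjoint from $\scrP(f)$. Inverse branches of $f^n$ pulled back along orbits returning to $B$ therefore land on simply connected sets in $\ccc\setminus\scrP(f)$ and enjoy Koebe distortion bounds. A standard first-return inducing construction (folklore theorem for countable Markov maps with bounded distortion) yields an $f_B$-invariant probability measure $\mu_B\ll\mathrm{Leb}|_B$; lifting $\mu_B$ via the usual tower gives a $\sigma$-finite $f$-invariant measure $\nu\ll\mathrm{Leb}$ on $\ccc$ with total mass $\nu(\ccc)=\int_B R_B\,d\mu_B$, where $R_B$ is the first-return time to $B$.

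For (ii) I argue by contradiction: assume $\mu$ is an $f$-invariant probability absolutely continuous with respect to Lebesgue. The hypothesis $\omega(z)\not\subset\scrP(f)$ on a set of positive Lebesgue measure, combined with the bounded orbit of $0$, is precisely what is needed to deduce conservativity and ergodicity of Lebesgue under $f$. Consequently $\sigma$-finite acims are unique up to multiplicative constant, so $\mu=c\nu$ for some $c>0$. It therefore suffices to prove $\nu(\ccc)=\int_B R_B\,d\mu_B=\infty$, i.e.\ a sharp lower estimate on the tail of the first-return time.

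Reducing to this return-time estimate is the easy half; carrying it out is the ``new method'' promised in the introduction. The intuition: $f(z)=\lambda e^z$ pushes any escape excursion from $B$ into the region $\re z\gg 1$, where the derivative $|\lambda|e^{\re z}$ is enormous, so preimages of $B$ have to traverse an enormous range of scales before returning. The structural identity $|f'|=|f|$ means the cross-sectional area of the sheets of $f^{-n}(B)$ decays like $|(f^n)'|^{-2}$ along each trajectory, while the hypothesis $\omega(z)\not\subset\scrP(f)$ ensures that a definite fraction of Lebesgue-typical orbits actually undergo long excursions before returning. Turning this heuristic into a quantitative bound such as $\mu_B(\{R_B>n\})\gtrsim 1/n$, which is already incompatible with $R_B\in L^1(\mu_B)$, is the principal technical obstacle. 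The difficulty lies in two intertwined features of the exponential: the combinatorial branching of inverse sheets (each preimage of a disk splits into infinitely many vertical translates spaced by $2\pi i$), and the need to show that the Lebesgue mass carried off to $\re z\to+\infty$ is not efficiently recaptured by $B$ within a controlled number of iterates. Once the divergence of $\int_B R_B\,d\mu_B$ is established, $\nu(\ccc)=\infty$, so no scalar multiple can be a probability, contradicting the existence of $\mu$ and completing the proof.
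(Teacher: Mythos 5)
Your outline matches the paper at the level of strategy (first--return inducing, folklore theorem, spreading to get the $\sigma$-finite acim, and reducing non-existence to divergence of the return-time integral), but the proposal has two genuine gaps, one of which you yourself flag as ``the principal technical obstacle'' and then leave open.

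First, you induce on an arbitrary disk $B$ with $2B\cap\scrP(f)=\emptyset$. That gives Koebe distortion, but it does \emph{not} give a Markov structure: the first return to a generic disk need not map each branch domain univalently \emph{onto} $B$, so the folklore theorem for full-branch countable Markov maps does not apply and the density of $\mu_B$ need not be bounded below. The paper fixes this with Lemma~\ref{lem:magic}, which constructs a connected \emph{nice} set $U$ ($f^n(\partial U)\cap U=\emptyset$ for $n>0$) via Rivera-Letelier's method; the nice property, together with $U\cap\scrP(f)=\emptyset$, is what forces every first-return branch to surject onto $U$. This step is not cosmetic and cannot be skipped.

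Second and more seriously, the return-time estimate is not proved, and the heuristic you give points at the wrong mechanism. You emphasise that $|f'|=|f|$ is enormous where $\re z\gg1$, and you conjecture a polynomial tail $\mu_B(\{R_B>n\})\gtrsim 1/n$. In fact a large derivative makes orbits return \emph{faster}, not slower. The paper's Lemma~\ref{lem:infty} isolates a completely different effect: it tracks points that iterate in a right half-plane sector $S_R$ (so $\re f^k(z)\ge h^k(x)$, a tower of exponentials) and then at step $n$ land in the sector $S_L$ where $\arg f(z)\approx\pi$. Such a point has $\re f^{n+1}(z)<-h^{n+1}(x)$, hence $|f^{n+2}(z)|\le r e^{-h^{n+1}(x)}$ is astronomically close to $0$. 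Since the orbit of $0$ is bounded and the derivative is \emph{bounded} on a neighbourhood of $\scrP(f)$ (Lemma~\ref{lem:escape}), the escape time from $B(0,2D)$ is at least of order $h^{n+1}(x)$ (Lemma~\ref{lem:longtime}). The measure of the relevant set $Q_n$ is only exponentially small, $m(Q_n)\ge\gamma^n$, while $\inf_{Q_n}r_U$ grows like a tower of exponentials, so the product diverges. So the actual source of long returns is the \emph{small} derivative near the bounded post-singular set after an excursion to the far left, not the large derivative on the far right, and the quantitative shape of the estimate is nothing like a $1/n$ tail. Without this lemma the proof does not close.

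A smaller remark: your reduction via uniqueness of $\sigma$-finite acims (needing conservativity and ergodicity of Lebesgue) is a valid alternative route, but the paper avoids it: given any acip $\mu$, restrict to $U$, note $\mu|_U$ is a finite $\psi$-invariant measure, invoke uniqueness of the $\psi$-invariant acip to bound the density below by $\mu(U)\varepsilon$, and apply Kac's lemma $1=\int_U r_U\,d\mu\ge\mu(U)\varepsilon\int_U r_U\,dm$. This is more elementary and only needs the already-cited fact that Lebesgue-a.e.\ point is transitive to ensure $\mu(U)>0$.
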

We denote, as usual, the $\omega$-limit set of $z\in \ccc$ by $\omega(z) \subset \cbar$
 and the post-singular set by $\scrP(f)$, here equal to the union of $\{\infty\}$ with the closure of the orbit of $0$.

Theorem~\ref{thm:easy} implies, in particular, that the map $z\mapsto
2\pi i \exp(z)$ has no absolutely continuous invariant probability
measure, which was a long-standing open problem (see
\cite{MayUrb07}). 

For a class of unimodal maps $f$ of the interval with non-recurrent critical point, Benedicks and Misiurewicz in \cite{MisBen:Flat}  showed that there exists an absolutely continuous invariant probability measure if and only if $\int \log |f'(x)| dx > -\infty$. The necessity of the integrability condition was later extended by the first author (in Theorem~1 of \cite{Me:Cusp})  to all $C^{1+\epsilon}$ interval maps without any hypothesis on the critical orbits, but under the assumption that the measure has positive Lyapunov exponent. As an example, no unimodal map of the form $x \mapsto C_1 - C_2\exp(-|x|^{-\alpha})$ with $\alpha \geq 1$ has an absolutely continuous invariant probability measure with positive Lyapunov exponent. 

This paper extends the main result of \cite{MisBen:Flat}  to the holomorphic setting. The strategy of the proof has two elements: construct a \emph{nice set} on which the density of a hypothetical measure must be bounded away from zero; show that the return time to the nice set is not integrable. This is similar in philosophy to the proof of Benedicks and Misiurewicz, although the settings differ.

 Existence of a
$\sigma$-finite measure is not new; it was shown in \cite{KotUrb03} under weaker
hypotheses but with a considerably more difficult proof.
On the other hand, existence of absolutely continuous invariant probability measures for
transcendental entire maps has been an interesting and open question
for some time with a response in only one situation: J-M Hemke in \cite{Hem05} 
proved that, for a class of entire maps for which the orbits of all asymptotic values converge to infinity sufficiently fast, $\omega(z) \subset \scrP(f)$. For these maps, $\scrP(f)$ has zero measure and it follows from the Poincaré Recurrence Theorem that no absolutely continuous invariant probability measure can exist. Hemke's work 
generalised a result proved independently by  M.\ Rees and M.\ Lyubich for $z \mapsto \exp(z)$ (\cite{Rees86}, \cite{Lyu87b}). 

 For non-entire maps the second author, in \cite{Sko03}, has
another negative result for some postcritically finite tangent
maps. For a large class of transcendental non-entire maps which
satisfy a Misiurewicz-type condition J.\ Kotus and G.\ \Swan in
\cite{KotSwi07} showed that absolutely continuous invariant
probability measures can exist.

The mathematics involved in the proof have the merit of being
surprisingly elementary. An important and somewhat magical technique
is Juan Rivera-Letelier's construction of nice sets for rational
dynamics (see \cite{Riv07}) which we adapt to the entire setting. 

An open set $U$ is called \emph{nice} if $f^n(\partial U) \cap U =
\emptyset$ for all $n >0$. This implies that every pair of pullbacks
(connected components of $f^{-n}(U), f^{-n'}(U)$ for some $n,n'\geq 0$)
is either nested or disjoint.
  Let us fix some $D >0$ such that $\scrP(f) \subset B(0,D)$. 

\begin{lem} \label{lem:magic} For each sufficiently large $x \in
  \arr$, there exists a  connected nice set $U \subset
  \ccc$ satisfying $B(x,4\pi) \subset U \subset B(x, 8\pi)$.
\end{lem}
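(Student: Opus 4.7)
Plan: The strategy is to adapt Rivera-Letelier's nice-set construction from \cite{Riv07} to the transcendental setting, using crucially that $f(z) = \lambda e^z$ has no critical points. For $x$ sufficiently large, the disk $V := B(x, 8\pi)$ is disjoint from $\scrP(f) \cup \{0\} \subset B(0, D)$. Since $f : \ccc \to \ccc \setminus \{0\}$ is a covering map and $V$ is simply connected and avoids $0$, an induction on $n \geq 1$ shows that every connected component $W$ of $f^{-n}(V)$ is simply connected, contains no preimage of $0$ (otherwise $f^n(0) \in V$, contradicting boundedness of $\Orb(0)$), and $f^n|_W : W \to V$ is biholomorphic.

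I would then look for $r \in [4\pi, 8\pi]$ such that $U := B(x, r)$ is nice. Setting $g_W := (f^n|_W)^{-1} : V \to W$, niceness of $U$ is equivalent to requiring $g_W(B(x,r)) \cap \partial B(x, r) = \emptyset$ for every $n \geq 1$ and every pullback component $W$. Applying Koebe distortion to $g_W$ on the nested disk $B(x, 6\pi) \Subset V$, the set of bad radii
\[
I_W := \bigl\{ r \in [4\pi, 8\pi] : g_W(\overline{B(x, r)}) \cap \partial B(x, r) \neq \emptyset \bigr\}
\]
is contained in an interval of length at most $K \cdot \mathrm{diam}\bigl(g_W(B(x, 4\pi))\bigr)$ for a universal Koebe constant $K$. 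It then suffices to show $\sum_W |I_W| < 4\pi$, summing over those pullbacks $W$ meeting $\overline{B(x, 4\pi)}$.

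I would derive this bound from strong expansion: because $|f'(z)| = |\lambda| e^{\re z} \geq c\,x$ on $V$, Koebe forces each further level of pullback to shrink diameters by a factor $\lesssim 1/x$. A complementary observation is that, at each level, only $O(1)$ branches of $\log$ can send the next preimage back into $V$, since $V$ has bounded imaginary extent while the branches are $2\pi i$-spaced. A geometric-series estimate then gives $\sum_W \mathrm{diam}\bigl(g_W(B(x, 4\pi))\bigr) = O(1/x)$, which is negligible compared to $4\pi$ once $x$ is large. Any radius $r$ outside the (small-measure) union of the $I_W$ yields the desired nice disk $U = B(x, r)$.

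The main obstacle is the bookkeeping of high-level pullbacks that might re-enter $V$ via distant branches of $\log$, a complication not present in the rational-map setting. Control is ultimately afforded by the fact that $|f'| \asymp |f|$ is very large on $V$, so the Koebe-estimated contraction dominates the proliferation of branches and produces the required geometric decay.
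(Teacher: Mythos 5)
Your plan is genuinely different from the paper's: you aim to exhibit a nice \emph{disk} $B(x,r)$ by a parameter-exclusion argument (almost every $r\in[4\pi,8\pi]$ works), whereas the paper constructs a connected nice set that is not a disk, via Rivera-Letelier's iterated-pullback union $U=\bigcup_n U_n$ starting from $U_0=B(x,4\pi)$. The sandwiching $B(x,4\pi)\subset U\subset B(x,8\pi)$ is then proved by an induction whose engine is a single uniform estimate: \emph{every} univalent pullback $W$ of $B(x,8\pi)$ has diameter $<4\pi$. That bound comes from Koebe (using extendibility to $B(x,8K\pi)$, hence the hypothesis $x>8K\pi+D$) combined with the observation that the Koebe inner disk of $W$ must fit in a horizontal strip of height $2\pi$, since $e^z$ is univalent only on such strips. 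No summation over pullbacks or over levels is ever needed.

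The gap in your argument is precisely the step you flag as ``the main obstacle'': the estimate $\sum_W|I_W|<4\pi$. Your justification rests on two claims, both of which only control the \emph{last} step of the composition $f^n|_W$. The expansion $|f'(z)|=|\lambda|e^{\re z}\gtrsim x$ holds on $V=B(x,8\pi)$, which constrains $f:f^{n-1}(W)\to V$, but the intermediate iterates $f^k(W)$ for $0<k<n-1$ can have arbitrarily small real part, where $|f'|$ is tiny; there is no per-level contraction factor $1/x$. Likewise, the ``only $O(1)$ branches of $\log$'' observation applies only to the final pullback into $V$: intermediate images $f^k(W)$ are not required to lie near $V$, so the branches proliferate without a clean bound on how many level-$n$ pullbacks meet $\overline{B(x,4\pi)}$. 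As a result the geometric-series estimate $\sum_W\mathrm{diam}\bigl(g_W(B(x,4\pi))\bigr)=O(1/x)$ is not established, and it is not clear it can be salvaged cheaply. (A related symptom: producing a nice \emph{disk} is a strictly stronger conclusion than the lemma asks for; the paper's strategy deliberately avoids it, and avoiding it is what lets the proof collapse to one Koebe estimate plus an induction.) The correct remedy is to prove the uniform diameter bound for pullbacks of $B(x,8\pi)$ and then run the nestedness induction on $U_n\setminus U_0$: any component $X$ of $U_{k+1}\setminus U_0$ has a first hitting time $m\geq 1$ to $U_0$, $f^m(X)\subset U_{k+1-m}\subset B(x,8\pi)$ by the inductive hypothesis, so $X$ sits inside a pullback of $B(x,8\pi)$ of diameter $<4\pi$ touching $\partial B(x,4\pi)$, hence $X\subset B(x,8\pi)$.
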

\begin{proof}
  There exists a $K >1$ such that for any $r$ and any holomorphic
  function $g$, univalent on $B(x, Kr)$, one has
  $$
  \left|\frac{g'(z)}{g'(z')}\right| \leq 2
  $$
  for all $z,z' \in B(x,r)$, by the Koebe distortion theorem.

Let $x$ satisfy $x >
  8K\pi + D$. Let $W$ be a (connected) pullback of $B(x,8\pi)$ and let $n > 0$ be such that 
  $f^n$ maps $W$ univalently onto $B(x,8\pi)$. Since
  $f^n_{|W}$ extends to map univalently onto $B(x, 8K\pi)$, it follows
  that the distortion of $f^n$ restricted to $W$ is bounded by 2.
%, as is the distortion of $f^{n-1}$ restricted to $f(W)$. 
%It follows that $W$ lies in a horizontal strip of height $2\pi$ (otherwise $f(W)$ wraps around 0), so $|W| < 4\pi$.
Thus
  there is $r>0$ such that $B((f^n_{|W})^{-1}(x),r)\subset
  W\subset B((f^n_{|W})^{-1}(x),2r)$. But $B((f^n_{|W})^{-1}(x),r)$ must lie
  in a horizontal strip of height $2\pi$, so we have that $|W| <
  4\pi$.

  We shall use this to construct nice sets exactly as per
  \cite{Riv07}. We include the proof for the reader's convenience: Let
  $U_0 := B(x,4\pi)$ and define $U_n$ as the connected component of
  $\bigcup_{i=0}^n f^{-i}(U_0)$ containing $U_0$ and $U = \bigcup_{n \geq 0} U_n$. We
  prove by induction that $U_n \subset B(x,8\pi)$ for all $n \geq
  0$. This is clearly true for $n = 0$. So suppose it is true for all
  $n \leq k$. We must show it holds for $n = k+1$.

  Let $X$ be a connected component of $U_{k+1} \setminus U_0$. Then
  there is a minimal $m \geq 0$ such that $f^m(z) \in U_0$ for some $z
  \in X$, and necessarily $m \geq 1$. Consider $f^m(X)$. This set is
  contained in $U_{k+1-m}$, and so by hypothesis is contained in
  $B(x,8\pi)$. But then $X$, being connected, is contained in some
  pullback $W$ with $|W| < 4\pi$. The result follows.
\end{proof}

\begin{lem} \label{lem:escape} There exists a $c > 0$ such that if
  $f^n(z) \notin B(0,2D)$ then $n > -c\log |z|$.
\end{lem}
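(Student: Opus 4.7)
Since the bound $n > -c\log|z|$ is trivial whenever $|z| \geq 1$ (the right side being non-positive while $n \geq 0$, and equal to zero only when $|z|=1$, which can be absorbed into the choice of $c$), my plan is to focus on the regime $|z|$ small, where $z \in B(0,2D)$. The key quantitative observation is that on the convex ball $B(0,2D)$, the map $f$ is Lipschitz with an explicit constant: for any $w \in B(0,2D)$,
\[
 |f'(w)| \;=\; |\lambda|\,e^{\re w} \;\leq\; |\lambda|\,e^{2D} \;=:\; M.
\]

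Given $z \in B(0,2D)$, let $n_0 \geq 1$ be the first time the orbit leaves $B(0,2D)$; then the hypothesis $f^n(z) \notin B(0,2D)$ forces $n \geq n_0$, so it suffices to bound $n_0$ from below. I would then compare the orbit of $z$ to that of $0$ and prove, by induction on $k = 0,1,\dots,n_0$, that
\[
 |f^k(z) - f^k(0)| \;\leq\; M^k\,|z|.
\]
The base case is trivial, and in the inductive step, for $k < n_0$ both $f^k(z)$ and $f^k(0)$ lie in the convex set $B(0,2D)$ (the former by definition of $n_0$, the latter because $\mathrm{Orb}(0) \subset \scrP(f) \subset B(0,D)$), so the straight segment between them also lies there. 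The mean value inequality applied along this segment gives $|f^{k+1}(z) - f^{k+1}(0)| \leq M |f^k(z) - f^k(0)|$, which propagates the bound.

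At step $k = n_0$, we have $|f^{n_0}(z)| > 2D$ while $|f^{n_0}(0)| \leq D$, hence $|f^{n_0}(z) - f^{n_0}(0)| > D$. Combined with the inductive estimate this yields $M^{n_0}|z| > D$, i.e.\ $n_0 > (\log D - \log|z|)/\log M$. After enlarging $D$ if necessary to ensure $M > 1$ and $2D \leq D^2$, one checks that $c := 1/(2\log M)$ does the job for all $|z|$ in the relevant range (and the range $|z| \in (D^2, 1]$ is handled trivially since there $n_0 \geq 1$ and $-c\log|z|$ is bounded, so the constant $c$ can be shrunk to cover it).

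The argument is routine in spirit; the only point requiring care is the convexity of $B(0,2D)$, which is what makes the mean value estimate valid along the whole segment joining $f^k(z)$ and $f^k(0)$. Obtaining a sharp constant $c$ is not the aim, so I would not try to optimise $M$; any bound on $|f'|$ that is uniform over an open convex neighbourhood of $\scrP(f) \cap \ccc$ would work equally well.
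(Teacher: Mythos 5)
Your proof is correct and follows essentially the same route as the paper's: bound $|f'|$ by $M$ on $B(0,2D)$, compare the orbit of $z$ to that of $0$ up to the first escape time $n_0$, deduce $M^{n_0}|z| > D$ and hence the logarithmic lower bound on $n$. You are merely more explicit about the induction, the convexity needed for the mean-value estimate, and the trivial regime $|z|\geq 1$ (the parenthetical interval $(D^2,1]$ is empty once $D\geq 2$, but this is a harmless slip, not a gap).
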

\begin{proof}
  Let $M > 1$ be such that $|f'(z)| < M$ for all $z \in
  B(0,2D)$. Suppose $f^n(z) \notin B(0,2D)$. Then $|f^n(z) - f^n(0)| >
  D > 1$. This implies that $|z - 0| = |z| > M^{-n}$. Thus $\log |z| >
  -n \log M$ and $n > (-1/\log M) \log |z|$.  \eprf

  In what follows, let $U$ be a nice set given by Lemma
  \ref{lem:magic} for some $x > 8\pi + 2D$, we fix $x$ too. In
  particular, $U \cap B(0,2D) = \emptyset$. We denote by $r_U(z)$ the
  first return time of $z$ to $U$. Also let $r, \phi \in \arr$ such
  that $\lambda = re^{i\phi}$.
\end{proof}

\begin{lem} \label{lem:longtime}
  There exists $C \in \arr$  and $c > 0$ with the following property. Suppose $z \in U$ and $\re(f^k(z)) \leq  -K $ for some $0 < k < r_U(z)$ and $K > 0$. Then $r_U(z) > C + cK$.
\end{lem}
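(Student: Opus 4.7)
The plan is to exploit the exponential collapse built into $f(w) = \lambda e^w$: whenever $\re f^k(z)$ is very negative, the next iterate $f^{k+1}(z)$ is tiny, and Lemma \ref{lem:escape} then forces any further iterate that escapes $B(0,2D)$ — in particular one that lands in $U$ — to be far in the future.

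First I would write $|f^{k+1}(z)| = |\lambda|\, e^{\re f^k(z)} \leq |\lambda|\, e^{-K}$, using $|e^w| = e^{\re w}$. Setting $w := f^{k+1}(z)$ and $m := r_U(z) - (k+1)$, the value $f^m(w) = f^{r_U(z)}(z)$ belongs to $U$, and by the choice $x > 8\pi + 2D$ in Lemma \ref{lem:magic} we have $U \cap B(0,2D) = \emptyset$, so $f^m(w) \notin B(0,2D)$. Assuming for the moment that $m \geq 1$, Lemma \ref{lem:escape} applied to $w$ yields
\[
m > -c'\log|w| \geq c'\bigl(K - \log|\lambda|\bigr),
\]
where $c' > 0$ is the constant provided by that lemma. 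Adding $k + 1 \geq 2$ then gives $r_U(z) > 2 + c' K - c'\log|\lambda|$, which is the claimed inequality with $c := c'$ and $C := 2 - c'\log|\lambda|$.

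The edge case $m = 0$ should be dispatched separately: in that case $w \in U$, so $|w| \geq x - 8\pi > 2D > 1$, which combined with $|w| \leq |\lambda|\, e^{-K}$ forces $K < \log|\lambda|$; hence $C + cK < 2 \leq r_U(z)$ with the same constants. The whole argument is a one-line manipulation of the explicit form of $\lambda e^z$ followed by a direct invocation of Lemma \ref{lem:escape}, so I do not foresee any genuine difficulty — the only delicate point is that Lemma \ref{lem:escape} presupposes $n > 0$, which is exactly why the $m = 0$ possibility has to be treated by hand.
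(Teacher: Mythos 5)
Your proof is correct and follows essentially the same route as the paper: bound $|f^{k+1}(z)| \leq |\lambda|e^{-K}$, then invoke Lemma~\ref{lem:escape} to bound from below the number of iterates needed to escape $B(0,2D)$, hence to re-enter $U$. The only cosmetic difference is that the paper simply sets $C := -c\log|\lambda|$, which makes the inequality $r_U(z) > C + cK$ hold vacuously whenever $K < \log|\lambda|$ (since then $C+cK<0<r_U(z)$), so the $m=0$ edge case never needs separate treatment; also note Lemma~\ref{lem:escape} as stated does not actually require $n>0$ (for $n=0$ one has $|z|\geq 2D>1$ so $-c\log|z|<0=n$), though your extra care does no harm.
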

\begin{proof}
  Let $c$ be given by Lemma \ref{lem:escape}.  We have $|f^{k+1}(z)|
  \leq r e^{-K}$. Then the time it takes for $f^{k+1}(z)$ to leave
  $B(0,2D)$ is greater than $-c \log(r e^{-K}) = -c(-K + \log r)$ by
  Lemma \ref{lem:escape}. Take $C := -c\log r$.
\end{proof}

\begin{lem} \label{lem:infty}
  Denote by $m$ Lebesgue measure. Then $\int_U r_U(z) dm = \infty$.
\end{lem}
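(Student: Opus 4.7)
The plan is to construct, for each $k \geq 1$, a subset $E_k \subset U$ on which the orbit escapes far to the right for $k-1$ iterates (with real parts growing as a tower of exponentials), and at step $k$ has an extremely negative real part, so that Lemma~\ref{lem:longtime} forces $r_U$ to be tower-large on $E_k$, enough to make the integral diverge.

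Specifically, I would fix $\eta \in (0, 1/2)$ and, writing $z = x + u + iv \in B(x,4\pi) \subset U$ and $w_j := f^j(z)$, define
\begin{equation*}
E_k := \bigl\{ z \in B(x,4\pi) : \cos(\im(w_j)+\phi) \geq \eta \text{ for } j = 0, \dots, k-2, \text{ and } \cos(\im(w_{k-1})+\phi) \leq -\eta \bigr\}.
\end{equation*}
With $A_0 := x-4\pi$ and $A_{j+1} := \eta r e^{A_j}$, a simple induction gives $\re(w_j) \geq A_j$ for $j = 1, \dots, k-1$ and $\re(w_k) \leq -A_k$ on $E_k$. Since $|w_j| \geq r e^{A_{j-1}}$ dwarfs $x + 8\pi$ for $x$ large, the iterates $w_1, \dots, w_k$ all lie outside $U$, so $k < r_U(z)$, and Lemma~\ref{lem:longtime} applied with $K = A_k$ yields $r_U(z) \geq C + c A_k$.

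Next I would establish the measure estimate $m(E_k) \geq p^k\, m(B(x,4\pi))$ for some $p = p(\eta) > 0$ by induction on $k$: conditional on the first $k-1$ angular conditions, the final one must hold on a fraction at least $p$. The key point is that $|(f^{k-1})'(z)| = \prod_{j=1}^{k-1} |w_j|$ is a tower, so on each univalent piece of the previous good set (obtained by intersecting with a horizontal $2\pi$-strip), $f^{k-1}$ maps diffeomorphically onto a domain of diameter much larger than $2\pi$; Koebe distortion combined with equidistribution of $\cos(\cdot + \phi)$ on large intervals then yields a uniform positive-density fraction satisfying the final angular condition.

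Since the $E_k$ are pairwise disjoint subsets of $U$,
\begin{equation*}
\int_U r_U \, dm \geq c \sum_{k \geq 1} A_k\, m(E_k) \geq c\, m(B(x,4\pi)) \sum_{k \geq 1} A_k\, p^k = \infty,
\end{equation*}
since $A_k$ is an iterated exponential and overwhelms any geometric sequence. The main obstacle will be the uniform-in-$k$ measure lower bound on $E_k$: converting the huge derivative of $\im \circ f^{k-1}$ into a positive-density fraction of the previous good set on which the final angular condition holds, via careful Koebe distortion arguments along deep iterated univalent branches.
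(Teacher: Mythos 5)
Your proposal is correct and follows essentially the same route as the paper's proof: the paper defines sets $P_n$ and $Q_n$ by the argument condition $\arg f(f^k(z)) \in [-\pi/4,\pi/4]$ (respectively $[3\pi/4,5\pi/4]$ at the last step), which is equivalent to your $\cos(\im(w_k)+\phi)\geq\eta$ condition with $\eta=1/\sqrt2$, derives the same tower-of-exponentials lower bound on $\re(f^n(z))$, establishes the same exponential-in-$n$ lower bound on $m(Q_n)$ via Koebe-type distortion (citing McMullen), and concludes via Lemma~\ref{lem:longtime} that $r_U$ is tower-large on $Q_n$. The only cosmetic differences are your choice of parameter $\eta$ and your explicit use of disjointness of the $E_k$ to sum the series, where the paper simply observes that $m(Q_n)\inf_{Q_n} r_U \to \infty$ along a single sequence.
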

\begin{proof}
  Define $h : \arr \to \arr$ by $h(y) = (r/2)\exp(y)$ and let
  $$
  S_R := \{z: \re(z) > x \mbox{ and } \arg f(z) \in [-\pi/4, \pi/4] \}
  $$
  and
  $$
  S_L := \{z: \re(z) > x \mbox{ and } \arg f(z) \in [3\pi/4, 5\pi/4]  \}.
  $$
Note that each connected component of $\{z : \arg f(z) \in [-\pi/4, \pi/4]\}$ is a horizontal strip of height $\pi/2$ and the components are periodic of period $2 i \pi$. A similar statement holds for $\{z : \arg f(z)\in [3\pi/4, 5\pi/4] \}$.
  Let 
$$P_n := \{z \in B(x,4\pi) : f^k(z) \in S_R \mbox{ for all } 0
  \leq k \leq n\},$$
 and let $Q_n := P_{n-1} \cap f^{-n}(S_L)$. 
   For $z \in S_R$, $\re(f(z)) \geq h(\re(z))$, so  by induction, for all $z \in P_n$, $\re(f^n(z)) \geq
  h^n(x)$. 
Then distortion arguments
  like in \cite{Mcm87} give that $m(Q_n)/m(P_n)$ tends to one and that
  $$
  \lim_{n\to \infty} m(P_n)/m(P_{n+1}) = 1/4.
  $$
  Thus there exists a $\gamma \in (0,1/4)$ such that for all $n \geq 1$,
  $$
  m(Q_n) \geq \gamma^n.$$ Now for $z \in Q_n$, $\re(f^{n+1}(z)) <
  -h^{n+1}(x)$, so we have $r_U(z) > C + ch^{n+1}(x)$ where the
  constants $c,C$ are given by Lemma \ref{lem:longtime}. But $h^n(x)$
  grows with $n$ faster than any exponential so
  $$
  \lim_{n\to \infty} m(Q_n) \inf \{r_U(z) : z \in Q_n\} = \infty.$$
\end{proof}

\begin{proof}[Proof of Theorem \ref{thm:easy}]
  Let $\psi$ denote the first return map to $U$. Since $U$ is nice and
  disjoint from $\scrP(f)$, every connected component of the domain of
  $\psi$ is mapped univalently onto $U$ by $\psi$.  Moreover the branches
  of $\psi$ are uniformly extendible, so the Koebe distortion theorem
  gives a uniform distortion bound for all branches of all iterates of
  $\psi$. Note that by Corollary 2\ of \cite{Boc96}, Lebesgue
  almost every point has a transitive orbit. %, so the domain of $\psi$ has full Lebesgue measure [this can be seen as follows: by the hypotheses, there exists an open ball $V$ disjoint from a neighbourhood of $\scrP(f)$ such that the measure of the recurrent points in  $V$ is positive; then use bounded distortion of pullbacks of $V$ and a density point argument to show that for Lebesgue almost all $z \in V$, $\omega(z) \supset V$ and thus $\omega(z)=\cbar$]. 

Denote by $V_k$ the set of points from $U$ whose first return time
to $U$ is $k$. By the Folklore Theorem (see for example \cite{Gou04}), 
there exists a unique absolutely continuous invariant
probability $\nu$ for $\psi$ and its density is bounded below by some
$\varepsilon > 0$. Then the spread measure $\mu:=\sum_{k=1}^\infty
\sum_{n=0}^{k-1} f_*^n\nu_{|V_k}$ is a $\sigma$-finite absolutely
continuous invariant measure for $f$. This gives the easy proof of its
existence.

  Suppose now $\mu$ is an absolutely continuous $f$-invariant
  probability measure. 
   By transitivity of
  Lebesgue almost every point, $\mu(U) > 0$. Then  $\mu$ is also a finite
  invariant measure for $\psi$, since $\psi$ is a first return
  map. By uniqueness, the density of $\mu$ is  then bounded from below on $U$ by
   $\mu(U) \varepsilon >0$.  Thus
  $$
  1 = \int_U r_U(z) \, d\mu \geq \mu(U) \varepsilon \int_U r_U(z) \, dm,
  $$
  the first equality being Kac' Lemma. This contradicts
  Lemma~\ref{lem:infty}, so no absolutely continuous invariant
  probability measure can exist.
\end{proof}
 
%After the first version of this paper was written we notified J. Kotus,  who  later informed us that
%she and G. \Swan have a proof of a similar result.

\subsection*{Acknowledgements} We would like to thank
J. Rivera-Letelier and the referee for helpful suggestions that
improved the final version of the paper.

\bibliography{bibl}

\begin{thebibliography}{10}

\bibitem{MisBen:Flat}
Michael Benedicks and Micha{\l} Misiurewicz.
\newblock Absolutely continuous invariant measures for maps with flat tops.
\newblock {\em Inst. Hautes \'Etudes Sci. Publ. Math.}, (69):203--213, 1989.

\bibitem{Boc96}
Heinrich Bock.
\newblock On the dynamics of entire functions on the {J}ulia set.
\newblock {\em Results Math.}, 30(1-2):16--20, 1996.

\bibitem{Me:Cusp}
Neil Dobbs.
\newblock On cusps and flat tops.
\newblock 2007, {\tt http://arxiv.org/abs/0801.3815}.

\bibitem{Gou04}
S{\'e}bastien Gou{\"e}zel.
\newblock {\em Vitesse de d{\'e}corr{\'e}lation et th{\'e}or{\`e}mes limites
  pour les applications non uniform{\'e}ment dilatantes}.
\newblock PhD thesis, Universit{\'e} de Paris-Sud, 2004.

\bibitem{Hem05}
Jan-Martin Hemke.
\newblock Recurrence of entire transcendental functions with simple
  post-singular sets.
\newblock {\em Fund. Math.}, 187(3):255--289, 2005.

\bibitem{KotSwi07}
Janina Kotus and Grzegorz {\'S}wi{\c{a}}tek.
\newblock Invariant measures for meromorphic {M}isiurewicz maps.
\newblock {\em to appear in Math. Proc. Cambridge Philos. Soc.}, 2007.

\bibitem{KotUrb03}
Janina Kotus and Mariusz Urba{\'n}ski.
\newblock Existence of invariant measures for transcendental subexpanding
  functions.
\newblock {\em Math. Z.}, 243(1):25--36, 2003.

\bibitem{Lyu87b}
M.~Yu. Lyubich.
\newblock The measurable dynamics of the exponential.
\newblock {\em Sibirsk. Mat. Zh.}, 28(5):111--127, 1987.

\bibitem{MayUrb07}
V.~Mayer and M~Urba{\'n}ski.
\newblock Ergodic properties of sub-hyperbolic functions with polynomial
  {S}chwarzian derivative.
\newblock Preprint 2007, {\tt
  http://www.math.unt.edu/$\sim$urbanski/papers.html}.

\bibitem{Mcm87}
Curt McMullen.
\newblock Area and {H}ausdorff dimension of {J}ulia sets of entire functions.
\newblock {\em Trans. Amer. Math. Soc.}, 300(1):329--342, 1987.

\bibitem{Rees86}
Mary Rees.
\newblock The exponential map is not recurrent.
\newblock {\em Math. Z.}, 191(4):593--598, 1986.

\bibitem{Riv07}
Juan Rivera-Letelier.
\newblock A connecting lemma for rational maps satisfying a no-growth
  condition.
\newblock {\em Ergodic Theory Dynam. Systems}, 27(2):595--636, 2007.

\bibitem{Sko03}
B.~Skorulski.
\newblock Non-ergodic maps in the tangent family.
\newblock {\em Indag. Math. (N.S.)}, 14(1):103--118, 2003.

\end{thebibliography}
\bibliographystyle{plain}

\end{document}